\newtheorem{theorem}{Theorem}[section]
\newtheorem{corollary}[theorem]{Corollary}
\newtheorem{proposition}[theorem]{Proposition}
\theoremstyle{definition}
\newtheorem{problem}[theorem]{Problem}
\newcommand{\C}{\mathbb{C}}
\newcommand{\Z}{\mathbb{Z}}
\renewcommand{\P}{\mathbb{P}}
\newcommand{\cC}{\mathcal{C}}
\newcommand{\cO}{\mathcal{O}}
\newcommand{\fo}{\mathcal{O}_{f.o.}}
\renewcommand{\d}{\mathrm{d}}
\def\bs{\backslash}
\begin{document}
\title{Gunning-Narasimhan's theorem with a growth condition}
\author{Franc Forstneri\v c and Takeo Ohsawa}
\address{
Faculty of Mathematics and Physics, University of Ljubljana, 
and Institute of Mathematics, Physics and Mechanics, Jadranska 19, 
1000 Ljubljana, Slovenia}
\email{franc.forstneric@fmf.uni-lj.si}

\address{
Graduate School of Mathematics, 
Nagoya University, 
Chikusaku Furocho, 
464-8602 Nagoya, Japan}
\email{ohsawa@math.nagoya-u.ac.jp}
\thanks{The research of the first named author was supported 
by grants P1-0291 and J1-2152 from ARRS, Republic of Slovenia.}

%
%
%
%
\subjclass[2000]{32E10, 32E30, 32H02; 14H05}
\date{\today}
\keywords{Riemann surface, divisor, function of finite order}

%
%
%
%
\begin{abstract}
Given a compact Riemann surface $X$ and a point $x_0\in X$,
we construct a holomorphic function without critical points 
on the punctured Riemann surface $R=X\bs \{x_0\}$
which is of finite order at $x_0$.
\end{abstract}
\maketitle

\section{The statement}
Let $X$ be a compact Riemann surface, let $x_0$ be an arbitrary point
of $X$, and let $R = X\bs \{x_0\}$. The set of holomophic functions
on $R$ will be denoted by $\cO(R)$. Let $U\subset X$ be a coordinate
neighborhood of the point $x_0$ and let $z$ be a local coordinate 
on $U$ with $z(x_0)=0$.
A holomorphic function $f\in \cO(R)$ on $R$ 
is said to be {\em of finite order}
(at the point $x_0$) if there exist positive numbers 
$\lambda$ and $\mu$ such that 
\begin{equation}
\label{eq:fo}
	|f(z)| \le \lambda \exp |z|^{-\mu} \quad\hbox{holds on}\ U\bs\{x_0\}.
\end{equation}
We denote by $\fo(R)$ the set of all holomorphic 
functions of finite order on $R$. For any $f\in \fo(R)$,
the {\em order} of $f$ is defined as the infimum of all
numbers $\mu>0$ such that (\ref{eq:fo}) holds for some 
$\lambda>0$.
By using Poisson-Jensen's formula it is easy to see that, 
for any nonvanishing holomorphic function
$f$ on $U\bs \{x_0\}$ satisfying (\ref{eq:fo}), there 
exist a neighborhood $V\ni x_0$ and a number $\chi>0$
such that 
$\frac{1}{|f(z)|}  \le  \chi \exp |z|^{-\mu}$ 
on $V \bs\{x_0\}$ (Hadamard's theorem, c.f.\ 
\cite[Chap.\ 5]{Ahlfors1966}).

In 1967 Gunning and Narasimhan proved 
that every open Riemann surface admits a holomorphic function 
without critical points \cite{Gunning-Narasimhan}. 
Our goal is to prove the following result
for punctured Riemann surfaces.

\begin{theorem}
\label{T1}     
If $X$ is a compact Riemann surface and $x_0\in X$ then
the punctured Riemannn surface $R=X\bs \{x_0\}$ admits 
a noncritical holomorphic function of finite order; that is,
$\{f \in \fo(R)\colon \d f \ne 0\ \hbox{everywhere}\}\ne \emptyset$.  
\end{theorem}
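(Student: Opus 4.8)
The plan is to combine the Gunning--Narasimhan construction with careful control of growth near $x_0$. I would start from the classical fact that $R=X\bs\{x_0\}$ is a Stein manifold of dimension $1$, so that every holomorphic function can be approximated by those arising from a more controlled source; concretely, I would realize the desired $f$ as a primitive $f=\int_{p_0}^{p}\omega$ of a holomorphic $1$-form $\omega$ on $R$ that has no zeros, where $p_0$ is a fixed base point. Since $\d f=\omega$ is then nowhere zero, $f$ is automatically noncritical, and the only issue is to arrange (i) that $f$ is single-valued on $R$, i.e.\ $\omega$ has vanishing periods over a basis of $H_1(R;\Z)$, and (ii) that $f$ satisfies the finite-order bound \eqref{eq:fo} near $x_0$.

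For step (i), recall that $H_1(R;\Z)\cong \Z^{2g}$ where $g$ is the genus of $X$, since puncturing a point does not change $H_1$ of a compact surface; a holomorphic $1$-form $\omega$ on $R$ is the restriction of a meromorphic $1$-form on $X$ with a single pole at $x_0$. To kill periods I would use the exponential trick of Gunning--Narasimhan: rather than prescribing $\omega$ directly, look for $\omega = e^{h}\,\d\zeta$ locally and more globally build $\omega$ as $\d f$ where $f$ is obtained by a Runge-type approximation on an exhaustion of $R$ by compact sublevel sets, at each stage correcting periods by adding exact forms and keeping the derivative nonvanishing by a small-perturbation argument (here one uses that the space of noncritical functions is open and that one has enough room on a Stein curve). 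The one genuinely new ingredient is to run this exhaustion so that the sublevel sets shrink toward $x_0$ in a controlled way and so that the approximating data are chosen with explicit exponential bounds; this is where \eqref{eq:fo} and the companion lower bound from Hadamard's theorem quoted in the excerpt will be used.

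For step (ii), the cleanest route is to prescribe the behavior of $\omega$ near $x_0$ first and then extend. In the coordinate $z$ on $U$ with $z(x_0)=0$, choose a local model such as $\omega = \d(e^{1/z^{k}})= -k z^{-k-1} e^{1/z^{k}}\,\d z$ for a suitable integer $k$, or more flexibly $\omega = \d(g(z))$ where $g$ is an entire function of $1/z$ of finite order; such $\omega$ is nonvanishing on a punctured disc and its local primitive $e^{1/z^{k}}$ already satisfies a bound of the form $|f(z)|\le \lambda\exp|z|^{-\mu}$ with $\mu=k$. The task is then to extend $\omega|_{U\bs\{x_0\}}$ to a nowhere-zero holomorphic $1$-form on all of $R$ with zero periods; I would do the extension by solving a $\dibar$-problem (or invoking the Weierstrass/Mittag-Leffler theory on the open Riemann surface $R$) to get \emph{some} holomorphic $1$-form $\omega_1$ on $R$ agreeing with the model to high order at $x_0$, then correct $\omega_1$ to be zero-free and period-free by adding a global holomorphic $1$-form that is small near $x_0$ (so as not to disturb the finite-order estimate), again using the Gunning--Narasimhan perturbation scheme on compact exhaustions of $R$.

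The main obstacle, and the step I expect to require the most care, is the simultaneous control of three competing requirements during the exhaustion argument: keeping $\d f$ nowhere zero, killing the $2g$ periods, and not spoiling the exponential bound near $x_0$. The first two are exactly what Gunning--Narasimhan handle, but their construction is entirely qualitative and gives no growth information; the contribution here is to interleave their Runge approximations with quantitative estimates, choosing the exhaustion $\{R_j\}$ so that $R\bs R_j$ is a shrinking punctured neighborhood of $x_0$ and the corrections added at stage $j$ decay fast enough (say geometrically) that the limit $f$ inherits the bound \eqref{eq:fo} from the local model. Verifying that the period corrections can always be taken small near $x_0$ uses that $H^1$ of a Stein curve is generated by forms one may take holomorphic across $x_0$ with prescribed vanishing, which is where the Hadamard lower bound for $1/|f|$ recorded in the excerpt enters to control the inverse and hence the admissible size of multiplicative corrections.
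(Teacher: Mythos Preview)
Your proposal correctly identifies the reduction: find a nowhere-vanishing holomorphic $1$-form $\omega$ on $R$ with vanishing periods, and take $f$ to be a primitive. But the scheme you outline for producing such an $\omega$ --- an infinite Gunning--Narasimhan exhaustion with quantitative growth control at each stage --- is not carried out, and the one concrete mechanism you suggest for it fails. You propose that the period-correcting forms at each stage can be taken ``holomorphic across $x_0$ with prescribed vanishing'' so as to be small near the puncture; but a $1$-form on $R$ that extends holomorphically across $x_0$ is a global holomorphic $1$-form on the compact surface $X$, and these span only a $g$-dimensional space, not the full $2g$-dimensional $H^1(R;\C)$. So you cannot kill all $2g$ periods with such correctors, and the growth control near $x_0$ is left genuinely unresolved. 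The invocation of Hadamard's lower bound does not bridge this gap: it controls $1/|f|$ once $f$ is already of finite order, but says nothing about how to keep an infinite sequence of Runge corrections from accumulating growth at $x_0$.

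The paper avoids the infinite iteration entirely. The key observation is that functions in $\cO_{alg}(R)$ --- meromorphic on $X$ with pole only at $x_0$ --- are automatically of finite order, as is the exponential of any such function. One therefore seeks the desired $1$-form in the shape $e^{\phi}\omega$ with $\phi\in\cO_{alg}(R)$ and $\omega$ a fixed nowhere-vanishing $1$-form of finite order (produced in the preliminaries). The $2g$ period conditions then become finitely many equations for $\phi$: first choose $\phi=h\in\cO_{alg}(R)$ by Mergelyan approximation on the union $\Gamma$ of a homology basis of curves so that all periods are small, then add a finite-dimensional perturbation $\sum_{i=1}^{2g}\zeta_i f_i$ with $f_i\in\cO_{alg}(R)$ chosen so that the map $\zeta\mapsto(\text{periods})$ is close to the identity near the origin. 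A degree argument gives a $\zeta$ with all periods equal to zero. This is a single finite-dimensional step; finite order holds by construction because every ingredient is algebraic, and no exhaustion or growth bookkeeping is needed.
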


We show that this result is the best possible one, except when 
$X=\C\P^1$ is the Riemann sphere in which case $R=\C$:

\begin{proposition}
If $X$ is a compact Riemann surface of genus $g\ge 1$
and $x_0\in X$ then every algebraic function 
$X\bs \{x_0\} \to\C$ has a critical point.
\end{proposition}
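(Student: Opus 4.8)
The plan is to argue by contradiction, exploiting the fact that an algebraic (i.e.\ rational) function $f\colon X\bs\{x_0\}\to\C$ extends to a meromorphic function $\tilde f$ on all of $X$ whose only pole is (possibly) at $x_0$. First I would dispose of the trivial cases: if $f$ is constant it is not noncritical unless $X$ is a point, so assume $f$ nonconstant; then $\tilde f\colon X\to\C\P^1$ is a nonconstant holomorphic map of some degree $d\ge 1$, and its critical points (branch points) are exactly the zeros of the holomorphic differential $\d\tilde f$ computed in local charts, i.e.\ the ramification points of the covering. If $f$ has no critical point on $R=X\bs\{x_0\}$, then all ramification of $\tilde f$ is concentrated over the single point $x_0$, lying in the fibre $\tilde f^{-1}(\infty)$.

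The key step is the Riemann-Hurwitz formula applied to $\tilde f\colon X\to\C\P^1$:
\begin{equation}
\label{eq:RH}
	2g-2 = d\,(2\cdot 0 - 2) + \sum_{p\in X}(e_p-1) = -2d + \sum_{p\in X}(e_p-1),
\end{equation}
where $e_p\ge 1$ is the ramification index of $\tilde f$ at $p$. Under our assumption, every $p$ with $e_p>1$ satisfies $\tilde f(p)=\infty$, hence $p=x_0$ is the \emph{only} such point, since the fibre over $\infty$ can contain at most one point that maps to $\infty$ only if\,\dots\ — more precisely, the fibre $\tilde f^{-1}(\infty)$ consists of points $p_1,\dots,p_k$ with $\sum_j e_{p_j}=d$, and all of them except possibly $x_0$ must be unramified. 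I would split into two cases. If $x_0\notin\tilde f^{-1}(\infty)$ (equivalently $f$ is bounded near $x_0$, so $\tilde f$ is finite at $x_0$ and in fact the value $f(x_0)$ is attained): then there is no ramification anywhere, so $\sum_p(e_p-1)=0$ and \eqref{eq:RH} gives $2g-2=-2d<0$, forcing $g\le 0$, contradicting $g\ge 1$. If $x_0\in\tilde f^{-1}(\infty)$: then the contribution to the ramification sum from the fibre over $\infty$ is at most $(e_{x_0}-1)\le d-1$ (since the other points $p_j$ in that fibre are unramified, and $e_{x_0}\le d$), and there is no ramification over any other point; hence $\sum_p(e_p-1)\le d-1$, and \eqref{eq:RH} yields $2g-2\le -2d+(d-1)=-d-1\le -2<0$, again contradicting $g\ge 1$.

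In both cases we reach a contradiction, so $f$ must have a critical point on $X\bs\{x_0\}$. The only mild subtlety — the step I would be most careful about — is the passage from "algebraic function on $X\bs\{x_0\}$" to "meromorphic function on $X$ holomorphic off $x_0$", i.e.\ that an algebraic function on the affine curve $X\bs\{x_0\}$ has at worst a pole at the puncture and no essential singularity; this is exactly what "algebraic" buys us (a meromorphic function on a compact Riemann surface is rational, and an algebraic function on a Zariski-open subset is the restriction of such), and it is what fails for the transcendental functions of finite order produced by Theorem~\ref{T1}. Everything else is the Riemann-Hurwitz bookkeeping above, which is completely routine.
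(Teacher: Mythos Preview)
Your argument is correct and is essentially the paper's proof via Riemann--Hurwitz. The only point worth tightening is your case split: since $f$ takes values in $\C$ (not $\C\P^1$) on all of $R=X\setminus\{x_0\}$, the extension $\tilde f$ has \emph{no} pole on $R$, so $\tilde f^{-1}(\infty)\subseteq\{x_0\}$; and as $f$ is nonconstant this must be an equality, with $e_{x_0}=d$. Thus your Case~1 is vacuous (and the claim there that ``there is no ramification anywhere'' is not actually justified at $x_0$, though it never matters), while in Case~2 there are no ``other points $p_j$'' in the fibre over $\infty$ and your bound $e_{x_0}\le d$ is in fact an equality. The paper uses precisely this observation to obtain $2-2g = 2d-(d-1)=d+1$, yielding the same contradiction you reach.
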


In the case when $X$ is a torus, this was shown in \cite[\S 4]{Majcen}.

\begin{proof}
Assume that $f\colon R=X\bs\{x_0\}\to \C$ is an algebraic function.
Then $f$ extends to a meromorphic map $X\to\C\P^1$
sending $x_0$ to the point $\infty$. Let $d$ denote the degree 
of $f$ at $x_0$, so $f$ equals  the map
$z\mapsto z^d$ in a certain pair of local holomorphic
coordinates at the points $x_0$ and $\infty$.
Since $f^{-1}(\infty)=\{x_0\}$, $d$ is also the global degree of $f$.  
By the Riemann-Hurwitz formula (see \cite{Hartshorne})
we then have
\[
	\chi(X)=d\chi(\C\P^1) - b,
\]
where $\chi(X)$ is the Euler
number of $X$ and $b$ is the total branching order of $f$
(the sum of its local branching orders over the points of $X$).
If we assume that $f$ has no critical points on $R$,
then it only branches at $x_0$, and its branching order 
at $x_0$ is clearly $b=d-1$. Hence the above equation reads
$2-2g = 2d - (d-1)= d+1\ge 1$ which is clearly impossible  
if $g\ge 1$. In fact, we see that any algebraic function 
$f\colon R=X\bs \{x_0\}\to \C$ with degree $d$ at $x_0$ 
must have precisely $(d+1)-(2-2g)= d+2g-1$ branch points
in $R$ when counted with algebraic multiplicities.
\end{proof}

\section{Preliminaries}
\label{Prel}
We assume that $X$ and $R=X\bs \{x_0\}$ are as above.

\begin{proposition}
\label{P1}
For any effective divisor  $\delta$  on $X$ whose support 
does not contain the point $x_0$ there exists $f\in \fo(R)$  
whose zero divisor $f^{-1}(0)$ coincides with $\delta$.   
\end{proposition}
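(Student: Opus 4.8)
The plan is to first solve the problem \emph{without} the growth condition using classical methods, and then upgrade the solution to have finite order by a careful construction near $x_0$. Since $R$ is a noncompact (open) Riemann surface, it is a Stein manifold, so every divisor is principal; hence there certainly exists $g\in\cO(R)$ with $g^{-1}(0)=\delta$. The issue is entirely local at the puncture: such a $g$ could blow up arbitrarily fast as $z\to 0$, so we must replace it by a function with the same zero divisor but controlled growth.

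The key step is to write any candidate as $f = g\cdot e^{h}$ for suitable $h\in\cO(R)$, since multiplying by a nonvanishing holomorphic function does not change the zero divisor, and every nonvanishing element of $\cO(R)$ is of this form ($R$ being simply-connected in cohomology with $\cO$, i.e. $H^1(R,\cO)=0$ and also $H^1(R,\Z)$ may be nontrivial, so one should be slightly careful — but since we only need \emph{one} such $f$, it suffices to correct $g$ multiplicatively by an exponential). So the task reduces to: given the prescribed (finite) growth of $g$ near $x_0$, find $h\in\cO(R)$ such that $g e^h$ satisfies \eqref{eq:fo}. Equivalently, writing $g$ in the coordinate $z$ on $U\bs\{x_0\}$, we need $\mathrm{Re}\, h(z)\le C + |z|^{-\mu} - \log|g(z)|$ near $0$ for some constants. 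The right-hand side is some function on the punctured disc; the obstacle is to realize a harmonic function on $R$ with prescribed (one-sided) boundary behaviour at the single ideal boundary point $x_0$.

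The cleanest route is to bypass the estimation of $g$ altogether: construct $f$ directly as a product of a \emph{rational} function on $X$ (a ratio of sections of a suitable positive line bundle) realizing the divisor $\delta$ up to a divisor supported at $x_0$, times an explicit \holo\ function on $R$ of the form $e^{a/z^{k}}$ near $x_0$ that is both nonvanishing and of finite order, patched to a global nonvanishing \holo\ function on $R$. Concretely: choose a meromorphic function $\varphi$ on $X$ with $(\varphi) = \delta - m x_0$ for some $m\ge 0$ (possible since any divisor of degree $0$, after adjusting by a multiple of $x_0$, is principal on a compact Riemann surface by the Abel–Jacobi theorem — here one uses that one is free to choose $m$). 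Then $\varphi$ is \holo\ and nonvanishing near $x_0$ apart from a pole of order $m$, so $\varphi(z)=z^{-m}u(z)$ with $u$ \holo, nonvanishing, and of finite order (indeed bounded) near $0$; thus $\varphi|_R\in\fo(R)$ already has the right zero divisor. One then only has to check that $\varphi$ restricted to $R$ lies in $\fo(R)$, which is immediate because a pole gives only polynomial growth, well within \eqref{eq:fo}.

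I expect the main obstacle to be the case distinction forced by the genus and by whether $\deg\delta$ already makes $\delta - m x_0$ principal for some $m\ge 0$: on a curve of genus $g\ge 1$, not every divisor of degree $0$ is principal, so one cannot always arrange $(\varphi)=\delta-mx_0$ exactly, and one must instead absorb the Jacobian obstruction. The fix is standard: realize $\delta$ as the zero divisor of a \emph{holomorphic section} of a line bundle $L\to X$ with $L$ chosen so that $L\otimes\cO(-m x_0)$ is trivial on $R$ for large $m$ — equivalently, trivialize $L|_R$ (every line bundle on the open Riemann surface $R$ is trivial, since $H^1(R,\cO^*)=0$ once $H^1(R,\cO)=0$ and $H^2(R,\Z)=0$, both of which hold for a noncompact surface) and read off the section as an honest \holo\ function on $R$ whose only growth comes from the transition near $x_0$, which is a power of $z$. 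So the argument is: (i) pick $L$ with a section $s$ cutting out $\delta$; (ii) trivialize $L|_R$; (iii) in the trivialization $s$ becomes $f\in\cO(R)$ with $f^{-1}(0)=\delta$; (iv) estimate $f$ near $x_0$, where the gluing is by $z^{j}$ for some integer $j$, giving at worst polynomial growth, hence $f\in\fo(R)$. The one point requiring care is step (ii)–(iv): one must make the trivialization of $L|_R$ explicit enough near $x_0$ to see the power-of-$z$ behaviour, which is where a quantitative version of the Oka–Grauert principle (or simply an explicit cocycle computation on the two-set cover $\{U, R\}$ of $X$) is needed.
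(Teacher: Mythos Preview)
Your proposal has a genuine gap: both your ``cleanest route'' and your ``fix'' ultimately aim to produce an $f$ with at worst \emph{polynomial} growth at $x_0$ (you say ``the gluing is by $z^j$ for some integer $j$, giving at worst polynomial growth''). But polynomial growth forces $f$ to extend meromorphically across $x_0$, so $(f)=\delta-kx_0$ for some $k\ge 0$, i.e.\ $\delta\sim kx_0$ in $\mathrm{Pic}(X)$. This is precisely the Jacobian obstruction you yourself flagged, and your line-bundle reformulation does not remove it: a trivialization of $L|_R$ whose transition to a local frame at $x_0$ is $z^j$ is the same thing as a meromorphic section of $L$ over $X$ that is nowhere zero on $R$, hence has divisor $jx_0$, which again says $L\cong\cO(jx_0)$, i.e.\ $\delta\sim jx_0$. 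For a generic effective $\delta$ on a curve of genus $\ge 1$ (e.g.\ $\delta=p$ with $p\ne x_0$ on an elliptic curve) this is simply false, so no such trivialization exists and your step (iv) cannot be carried out.

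The paper's proof accepts that polynomial growth is too much to ask and instead allows controlled essential-singularity behaviour. Starting from any $f_0\in\cO(R)$ with $f_0^{-1}(0)=\delta$, one takes a meromorphic $h$ on $X$ with $h=c(z)z^m$ near $x_0$ (where $m$ is the winding number of $f_0$), so that $\log(f_0/h)$ is single-valued on a punctured neighbourhood $V\setminus\{x_0\}$. One then solves an additive Cousin problem $u_1-u_2=\log(f_0/h)$ with $u_1$ holomorphic on $R\setminus\overline W$ and $u_2$ meromorphic on $V$ with a \emph{pole} at $x_0$. The function $f=f_0e^{-u_1}=he^{-u_2}$ then has the right zero divisor and, near $x_0$, is (meromorphic)$\times\exp(\text{pole})$, which has finite order in the sense of \eqref{eq:fo} though not polynomial growth. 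The key idea you are missing is this additive splitting with a prescribed pole for $u_2$: it is what converts the uncontrolled growth of an arbitrary $f_0$ into growth of type $\exp|z|^{-\mu}$ without ever requiring $\delta\sim kx_0$.
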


\begin{proof}
Since holomorphic vector bundles over noncompact 
Riemann surfaces are trivial by Grauert's Oka principle, 
there exists a holomorphic function  
$f_0$ on  $R$  whose zero divisor equals $\delta$. 
Let  $V$  be a disc neighborhood of the point $x_0$ in $X$,
with a holomorphic coordinate $z$ in which $z(x_0)=0$,  
such that $f_0$ does not vanish on $V \bs\{x_0\}$. 
Let $m\in \Z$ denote the winding number of $f$ around 
the point $x_0$. Choose a meromorphic function $h$ on $X$ 
such that $h(z)= c(z) z^m$ on $z\in V$ for some nonvanishing
holomorphic function $c$ on $V$, and such that 
all remaining zeros and poles of $h$ lie in $X\bs \overline V$.
Then $f_0/h$ is a nowhere vanishing holomorphic function 
with winding number zero in $V\bs \{x_0\}$, and hence
$\log(f_0/h)$  has a single valued holomorphic branch on $V\bs \{x_0\}$. 
Choose a smaller disc $W\Subset V$ centered at $x_0$.
By solving a Cousin-I problem we find a 
holomorphic functions $u_1$ on $X\bs \overline W$  
and $u_2$  on $V\bs \{x_0\}$ such that  
$u_1 - u_2 =\log(f_0/h)$ holds on $V \bs \overline W$,
and such that $x_0$ is a pole of the function $u_2$. 
Hence, letting  $f = h e^{-u_2}$  on  $V\bs\{x_0\}$ 
and  $f = f_0 e^{-u_1}$ on $R\bs W$, we obtain  
a function $f\in\fo(R)$ satisfying $f^{-1}(0) = \delta$.  
\end{proof}

Let  $L\to X$  be a holomorphic line bundle and let  $h$  
be a fiber metric of $L$.  A holomorphic section $s$  of  $L$ 
over  $R$  is said to be of finite order if the length  
$|s|$ of $s$ with respect to $h$ satisfies on $U\bs \{x_0\}$,
as a function of the local coordinate $z$, that          
\[
	|s|(z) \le \lambda \exp |z|^{-\mu} 
	\quad\hbox{for\ some}\ \lambda,\mu\in (0,\infty).
\]	
The order of $s$ is defined similarly as in the case of 
holomorphic functions. Since every holomorphic line bundle 
over $X$ is associated with a divisor, Proposition \ref{P1}
implies the following:

\begin{proposition}
\label{P2}    
For any holomorphic line bundle $L$ over  $X$, there exists 
a holomorphic section $s$ of the restricted bundle $L|_R$  
such that $s$ is of finite order and $s(x)\ne 0$
for all $x\in R$. 
\end{proposition}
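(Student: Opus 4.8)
The plan is to derive this from Proposition \ref{P1} once $L$ has been realized as the line bundle of a divisor whose support avoids $x_0$. Since every holomorphic line bundle over $X$ comes from a divisor, write $L=\cO_X(D)$ and fix a point $q\in R$. For $N$ sufficiently large the bundle $L\otimes\cO_X(Nq)$ has large degree, so its complete linear system is base point free and in particular admits a holomorphic section $t$ over $X$ with $t(x_0)\ne 0$ (equivalently, Riemann--Roch \cite{Hartshorne} shows that for $N$ large the sections of $L\otimes\cO_X(Nq)$ vanishing at $x_0$ form a proper subspace of $H^0(X,L\otimes\cO_X(Nq))$). Its zero divisor $E^+:=(t)$ is then effective, satisfies $E^+\sim D+Nq$, and does not contain $x_0$ in its support. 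Putting $E^-:=Nq$, which is effective and supported at the single point $q\ne x_0$, we obtain a linear equivalence $D\sim E^+-E^-$ with $E^\pm$ effective and $x_0\notin\mathrm{supp}(E^+)\cup\mathrm{supp}(E^-)$.

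Next I would transport the tautological meromorphic section of $\cO_X(E^+-E^-)$ through an isomorphism $L\cong\cO_X(E^+-E^-)$ to obtain a meromorphic section $\sigma$ of $L$ over $X$ with divisor $(\sigma)=E^+-E^-$, and then apply Proposition \ref{P1} to the two effective divisors $E^+$ and $E^-$ to produce functions $f^+,f^-\in\fo(R)$ whose zero divisors on $R$ are exactly $E^+$ and $E^-$ respectively. The candidate is the section
\[
	s=\sigma\cdot\frac{f^-}{f^+},
\]
which a priori is only a meromorphic section of $L$ over $R$. At each point of $R$ the order of $s$ equals the order of $\sigma$ plus that of $f^-$ minus that of $f^+$, i.e.\ the coefficient of that point in the divisor $(E^+-E^-)+E^--E^+=0$; hence $s$ is in fact a holomorphic section of $L|_R$ with $s(x)\ne 0$ for every $x\in R$.

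It remains to check that $s$ is of finite order, which is the only step calling for a little care. Shrinking the coordinate \nbd\ $U$ of $x_0$ if necessary, the section $\sigma$ is holomorphic and nowhere zero on $U$ because $(\sigma)$ avoids $x_0$; hence, in a local holomorphic frame for $L$ over $U$ and with respect to the fiber metric $h$, the length $|\sigma|_h$ is bounded between two positive constants on $U\bs\{x_0\}$. The factor $f^-$ satisfies a bound of the form (\ref{eq:fo}) by construction, while $f^+$, a function of finite order that is moreover nonvanishing near $x_0$, has $1/|f^+|$ bounded by $\chi\exp|z|^{-\mu}$ near $x_0$ by the Hadamard-type estimate recalled after (\ref{eq:fo}). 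Multiplying these three estimates gives $|s|_h(z)\le\lambda\exp|z|^{-\mu'}$ on $U\bs\{x_0\}$ for suitable $\lambda,\mu'>0$, which is exactly the assertion that $s$ is a finite order section of $L|_R$.

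The main obstacle, mild but genuinely the heart of the reduction, is the first paragraph: representing $L$ by a divisor that misses $x_0$, so that Proposition \ref{P1} becomes applicable to its positive and negative parts. After that the argument is formal manipulation of divisors together with the lower bound for $1/|f^+|$ near $x_0$, both of which are already at hand.
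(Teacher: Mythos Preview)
Your proof is correct and follows essentially the same route as the paper: pick a meromorphic section of $L$, apply Proposition~\ref{P1} to its zero and pole divisors, and form the obvious quotient, using Hadamard's lower bound for the reciprocal of the denominator near $x_0$.

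The one point worth noting is that what you call ``the heart of the reduction'' --- arranging via Riemann--Roch that the meromorphic section has no zero or pole at $x_0$ --- is not actually needed. The paper simply takes \emph{any} nonzero meromorphic section $v$ of $L$, lists only its poles $p_1,\ldots,p_m$ and zeros $q_1,\ldots,q_n$ lying in $R$, finds $f,g\in\fo(R)$ with those zero divisors by Proposition~\ref{P1}, and sets $s=fv/g$. A possible zero or pole of $v$ at $x_0$ contributes only a factor $|z|^k$ to $|s|_h$ near $x_0$, which is absorbed into the finite-order estimate. So your Riemann--Roch step is a harmless detour rather than an essential ingredient.
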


\begin{proof}
Let $v$ be any meromorphic nonzero section of $L$. 
Let $p_1,\ldots,p_m$ (resp.\ $q_1,\ldots,q_n$) be the poles 
(resp.\ the zeros) of $v$ in $R$. By Proposition \ref{P1} 
there exist functions $f,g\in \fo(R)$  such that 
$p_1+p_2+\ldots+p_m$ (resp.\ $q_1+ \ldots +q_n$) is 
the zero divisor of  $f$ (resp.\ of $g$). 
Then the section $s = fv/g$ satisfies the stated properties.
\end{proof}

\begin{corollary}
\label{C1}
There exists a holomorphic 1-form of finite order on $R$ which does not vanish anywhere. 
\end{corollary}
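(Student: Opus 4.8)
The plan is to apply Proposition \ref{P2} to the canonical line bundle $L = K_X$ of $X$. A holomorphic section of $K_X$ over the open subset $R\subset X$ is the same thing as a holomorphic $1$-form on $R$, and a nowhere-vanishing such section is a nowhere-vanishing holomorphic $1$-form. So Proposition \ref{P2} will directly produce the desired $1$-form, once we are convinced that the ``finite order'' condition for sections of $K_X$ (measured with respect to a fiber metric) coincides with the finite-order condition relevant to Theorem \ref{T1}.

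To see this, choose any smooth fiber metric $h$ on $K_X$; this is possible because $X$ is compact. On the coordinate \nbd\ $U$ with coordinate $z$ we may write a $1$-form as $\omega = a(z)\,\d z$ with $a$ holomorphic on $U\bs\{x_0\}$, and then $|\omega|_h(z) = |a(z)|\cdot |\d z|_h(z)$. The factor $|\d z|_h$ is smooth and strictly positive on $U$, hence bounded above and below by positive constants on some smaller disc $V\ni x_0$. Therefore, on $V\bs\{x_0\}$, the length $|\omega|_h$ obeys a bound of the form $\lambda \exp|z|^{-\mu}$ if and only if the coefficient $|a(z)|$ obeys such a bound; in particular $\omega$ is of finite order as a section of $K_X$ precisely when it is of finite order in the sense appropriate to $1$-forms, and the two notions of order agree (both being independent of the choices of $h$ and of $z$).

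Granting this identification, the corollary is immediate: the section $s$ furnished by Proposition \ref{P2} for the bundle $K_X$ is a holomorphic $1$-form on $R$ that is of finite order and vanishes nowhere. I do not expect a genuine obstacle here, since the substance is already contained in Propositions \ref{P1} and \ref{P2}; the only step requiring a small amount of care is the comparison of $|\omega|_h$ with its coordinate coefficient near $x_0$ sketched above. (Discarding the finite-order requirement, this argument also recovers the existence of a nowhere-vanishing holomorphic $1$-form on the open Riemann surface $R$, which is a special case of the Gunning--Narasimhan theorem.)
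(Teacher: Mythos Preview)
Your argument is correct and is exactly the approach the paper intends: Corollary~\ref{C1} is stated without proof as an immediate consequence of Proposition~\ref{P2} applied to the canonical line bundle $K_X$. Your additional remark verifying that the metric and coordinate notions of ``finite order'' agree near $x_0$ is a helpful clarification the paper leaves implicit.
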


Let $\omega$ be a nowhere vanishing holomorphic 1-form of finite 
order on $R$ guaranteed by Corollary \ref{C1}. 
Then, Theorem \ref{T1} is equivalent to saying that there exists  
a function $g\in\fo(R)$ such that $g^{-1}(0) =\emptyset$ 
and $\int_\gamma g\omega = 0$ holds for any 1-cycle $\gamma$ on $R$, 
for the primitives of $g\omega$ will then be without critical points 
and clearly of finite order, the converse being obvious.     

We shall show that such $g$ can be found in a subset of 
$\fo(R)$ consisting of functions of the form  
$\exp \int_{x_1}^x \eta$ where $\eta$ are meromorphic 1-forms 
on $X$ which are holomorphic on $R$ and $x_1\in R$ is an
arbitrary fixed point in $R$.

Let us denote by $\Omega^1_{alg}(R)$ (resp.\ $\cO_{alg}(R)$) 
the set of meromorphic 1-forms (resp.\ meromorphic functions) on  
$X$ which are holomorphic on $R$. The general 
theory of coherent algebraic sheaves on affine algebraic varieties 
implies the following (c.f.\ \cite{Serre} or \cite{Hartshorne}).

\begin{proposition}
\label{P3} 
Every element of $H^1(R;\C)$ is represented by an element 
of $\Omega^1_{alg}(R)$ as a de Rham cohomology class.      
\end{proposition}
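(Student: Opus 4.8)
The plan is to reduce the proposition to a Riemann--Roch argument on the compact surface $X$. First, the restriction homomorphism $H^1(X;\C)\to H^1(R;\C)$ is an isomorphism: writing $g$ for the genus of $X$, the surface $R$ is homotopy equivalent to a bouquet of $2g$ circles, so $b_1(R)=2g=b_1(X)$, while $H_1(R;\Z)\to H_1(X;\Z)$ is onto (the relator $\prod_{i=1}^{g}[a_i,b_i]$ of $\pi_1(X)$ is the image of a loop around $x_0$, which is null-homologous in $R$), so the dual map $H^1(X;\C)\to H^1(R;\C)$ is injective, hence bijective. It is therefore enough to show that every de Rham class on $X$ is represented, via periods over $1$-cycles avoiding $x_0$, by a meromorphic $1$-form on $X$ whose only pole is $x_0$: such a form is holomorphic (hence algebraic) on $R$, so it lies in $\Omega^1_{alg}(R)$; having a single pole its residue there vanishes, so the periods indeed define a de Rham class on $X$, and the class the form represents in $H^1(R;\C)$ is the restriction of that class — which, by the isomorphism just established, ranges over all of $H^1(R;\C)$.

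Second, I would start from the classical fact that every element of $H^1(X;\C)$ is the period class of some meromorphic $1$-form of the second kind (one with no residues) on $X$ — for instance via the Hodge decomposition $H^1(X;\C)=H^0(X,\Omega^1_X)\oplus\overline{H^0(X,\Omega^1_X)}$ together with surjectivity of the period map on differentials of the second kind. Let $\eta_0$ be such a representative, with poles at points $p_1,\dots,p_k\in R$ (poles at $x_0$, if any, are left untouched). Near each $p_j$ write $\eta_0=\d P_j+(\text{holomorphic})$ for a principal part $P_j$ in a local coordinate, the vanishing of the residue being exactly what makes this possible. By Riemann--Roch on $X$, for $N$ large the map $H^0\bigl(X,\cO(m p_j+N x_0)\bigr)\to\C^{m}$ sending a function to its principal part at $p_j$ is surjective (its kernel is $H^0(X,\cO(Nx_0))$, and $h^0(\cO(m p_j+Nx_0))-h^0(\cO(Nx_0))=m$ once both divisors are nonspecial), so there is a meromorphic function $f_j$ on $X$, holomorphic off $\{p_j,x_0\}$, whose principal part at $p_j$ is $P_j$. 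Then $\eta:=\eta_0-\sum_{j=1}^{k}\d f_j$ has its only pole at $x_0$, while the exact forms $\d f_j$ have vanishing periods over cycles in $X\setminus\{p_j,x_0\}$, so $\eta$ still represents the original class. Thus $\eta\in\Omega^1_{alg}(R)$ represents the prescribed class of $H^1(R;\C)$, which proves the proposition.

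Finally, I note that the proposition is also a special case of Grothendieck's algebraic de Rham theorem: $R$ is a smooth affine curve, so Serre's vanishing for coherent sheaves on affine varieties collapses the hypercohomology of the algebraic de Rham complex $\cO_{alg}(R)\xrightarrow{\d}\Omega^1_{alg}(R)$ to give $H^1(R;\C)\cong\Omega^1_{alg}(R)/\d\,\cO_{alg}(R)$; this is presumably the route behind the references to \cite{Serre} and \cite{Hartshorne}. Its analytic counterpart is that $R$, being an open Riemann surface, is Stein, so $H^1(R,\cO_R)=0$, and the holomorphic de Rham sequence $0\to\underline{\C}\to\cO_R\xrightarrow{\d}\Omega^1_R\to0$ already yields a surjection $H^0(R,\Omega^1_R)\to H^1(R;\C)$ — every class has a \emph{holomorphic} representative on $R$. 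In the elementary argument above the two classical inputs (second-kind differentials spanning de Rham cohomology, and Riemann--Roch) do the real work; the step specific to the punctured-surface setting, and the one I expect to be the main obstacle to a self-contained treatment, is the last one, pushing all poles onto $x_0$ — it is exactly there that one passes to the compactification $X$ and that the finite-order (meromorphic) behaviour at $x_0$ is forced.
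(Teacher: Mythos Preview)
Your argument is correct. The paper, however, gives no proof of this proposition at all: it simply states it as a consequence of ``the general theory of coherent algebraic sheaves on affine algebraic varieties'' with references to \cite{Serre} and \cite{Hartshorne}. That is precisely the route you identify in your final paragraph --- Serre vanishing on the affine curve $R$ collapses the algebraic de Rham hypercohomology to $\Omega^1_{alg}(R)/\d\,\cO_{alg}(R)\cong H^1(R;\C)$ --- so on that point you and the paper agree.

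What is genuinely different is your first two paragraphs, which supply an elementary, self-contained proof the paper does not attempt. Your reduction $H^1(X;\C)\cong H^1(R;\C)$ via the homotopy type of $R$, followed by the classical fact that second-kind differentials span de Rham cohomology of $X$, and then the Riemann--Roch step to move all poles onto $x_0$, replaces a black-box appeal to affine sheaf theory with explicit curve theory. The trade-off is clear: the paper's citation is short and works verbatim for any smooth affine variety, whereas your argument is specific to a once-punctured curve but uses nothing beyond Riemann--Roch and Hodge theory on $X$, and it makes transparent why a \emph{single} puncture suffices (any pole can be pushed to $x_0$ once $Nx_0$ is nonspecial). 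Your identification of the pole-pushing step as the crux is apt: it is exactly the passage from ``holomorphic on $R$'' to ``meromorphic on $X$ with pole only at $x_0$,'' i.e., the algebraicity that the paper needs.
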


Let $K$ be a compact set in $R$ and let $\cO(K)$  denote the set 
of all continuous functions on $K$ 
which are holomorphically extendible to 
some open neighborhoods of $K$ in $X$. 
Then the Runge approximation theorem says the 
following in our situation.

\begin{proposition}
\label{P4}
For any compact set  $K\subset R$  such that $R\bs K$ is connected, 
the image of the restriction map $\cO_{alg}(R)\to \cO(K)$ is dense 
with respect to the topology of uniform convergence.
\end{proposition}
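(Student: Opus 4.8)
The plan is to deduce this from two classical facts: the Behnke--Stein approximation theorem on open Riemann surfaces, and the density of regular (algebraic) functions among all holomorphic functions on an affine variety. First I would record that $K$ is Runge in $R$ in the appropriate sense: since $R$ is noncompact and $R\bs K$ is connected, $R\bs K$ cannot be relatively compact in $R$, so it has no relatively compact connected component. This is exactly the hypothesis under which the Behnke--Stein theorem asserts that the restriction $\cO(R)\to\cO(K)$ has dense image for the topology of uniform convergence. Hence it suffices to prove that every $g\in\cO(R)$ can be approximated uniformly on $K$ by elements of $\cO_{alg}(R)$.

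For this I would use the affine algebraic structure of $R=X\bs\{x_0\}$ directly. Its ring of regular functions is precisely $\cO_{alg}(R)$, which is finitely generated over $\C$, say by $\phi_1,\dots,\phi_N$; these define a closed algebraic embedding $\Phi=(\phi_1,\dots,\phi_N)\colon R\hookrightarrow\C^N$, so that $\Phi(R)$ is a closed complex submanifold of $\C^N$. By Cartan's Theorem B the restriction map $\cO(\C^N)\to\cO(\Phi(R))$ is surjective, so $g$ extends to an entire function $G$ on $\C^N$. Since polynomials are dense in $\cO(\C^N)$ for the compact-open topology (truncate the Taylor expansion), $G$ is approximated uniformly on the compact set $\Phi(K)$ by a polynomial $P$; then $P\circ\Phi$ lies in $\cO_{alg}(R)$ and approximates $g$ uniformly on $K$. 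This completes the argument.

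The ingredients here---Behnke--Stein, Cartan's Theorem B, density of polynomials in $\cO(\C^N)$, and the existence of a closed algebraic embedding of a smooth affine curve into some $\C^N$---are all standard, so I do not expect a serious obstacle; the one point deserving explicit attention is the observation that the hypothesis ``$R\bs K$ connected'' genuinely forces $R\bs K$ to have no relatively compact connected component, which is where noncompactness of $R$ enters and which is precisely what makes Behnke--Stein applicable. Note that the genus of $X$ intervenes only invisibly, through the finite generation of $\cO_{alg}(R)$ (ultimately Riemann--Roch). An alternative, more self-contained route replaces the embedding step by K\"othe--Serre duality for the dual of $\cO(K)$: via Hahn--Banach one reduces to showing that a holomorphic $1$-form near $R\bs K$ whose residue against every element of $\cO_{alg}(R)$ vanishes must be trivial, which one checks by a Riemann--Roch dimension count for $H^0(X,\cO_X(nx_0))$ together with the identity theorem on the connected set $X\bs K$; but this demands more care with the precise form of the duality and with the topology on $\cO(K)$ than the embedding argument does.
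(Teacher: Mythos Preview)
Your argument is correct. The paper itself offers no proof of this proposition: it simply records it as ``the Runge approximation theorem'' in the present setting and moves on. Your two-step reduction---first Behnke--Stein to pass from $\cO(K)$ to $\cO(R)$ (using that the single connected component $R\bs K$ cannot be relatively compact since $R$ is noncompact), then the closed affine embedding $\Phi\colon R\hookrightarrow\C^N$ together with Cartan's Theorem~B and Taylor truncation to pass from $\cO(R)$ to $\cO_{alg}(R)$---is exactly the standard way to substantiate that citation, and every step is in order. The alternative duality route you sketch at the end also works but, as you note, is fussier; the embedding argument is cleaner and is all that is needed here.
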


The proof of Theorem \ref{T1} to be given below is basically a 
combination of Corollary \ref{C1}, Proposition \ref{P3} 
and Proposition \ref{P4}. In order to make a  
short cut argument, we shall apply a refined version 
of Proposition \ref{P4} (Mergelyan's theorem) below.

\section{Proof of Theorem \ref{T1}}      
\label{Proof-T1}
For any $\cC^1$ curve $\alpha\colon [0,1]\to X$  
we denote by $|\alpha|$ its trace, i.e.,
$|\alpha| = \{\alpha(t)\colon 0 \le t \le 1\}$.
If $\alpha$ is closed ($\alpha(0)=\alpha(1)$), we denote 
by $[\alpha]$ its homology class in $H_1(X;\Z)$. 

Let $g$ denote the genus of $X$. There exist 
simple closed real-analytic curves 
$\alpha_1,\ldots,\alpha_{2g}$ in $R$ 
satisfying 
\begin{equation}
\label{eq:hom}
	H_1(X;\Z) = \sum_{i=1}^{2g} \Z [\alpha_i]
\end{equation}
such that $\cap_{i=1}^{2g} |\alpha_i| = \{p\}$ holds for some 
point $p\in R$ and such that, putting 
$\Gamma=\cup_{i=1}^{2g} |\alpha_i|$, the complement     
$R\bs \Gamma$ is connected.

Let $\omega$ nowhere vanishing holomorphic 1-form of finite 
order on $R$ furnished by Corollary \ref{C1}. 
For each curve $\alpha_i$ there is a neighborhood 
$U_i \supset |\alpha_i|$ in $R$ and a 
biholomorphic map $\varphi_i$ from an annulus 
$A_r=\{w \in \C \colon 1-r < |w| < 1+r\}$ onto $U_i$ 
for a sufficiently small $r>0$ such that the 
positively oriented unit circle 
$\{|w|=1\}$ is mapped by $\varphi_i$ onto the curve 
$\alpha_i$, with $\varphi_i(1)=p$.
For $i=1,\ldots,2g$ put
\[
	 \phi_i^* \omega= H_i(w)\, \d w, \qquad     
	 n_i = \frac{1}{2\pi \sqrt{-1}} \int_{|w|=1}\d \log H_i \in \Z. 
\]
By Proposition \ref{P3} there exists 
$\xi\in \Omega^1_{alg}(R)$ such that  
\begin{equation}
\label{eqn:ni}
        n_i   =  \frac{1}{2\pi \sqrt{-1}} \int_{\alpha_i} \xi,
		\qquad i=1,\ldots, 2g.
\end{equation}        
Let  
\[
		u(x) = \exp \int_{p}^x \xi,\qquad x\in R.
\]
By (\ref{eq:hom}) and (\ref{eqn:ni}) the 
integral is independent of the path
in $R$.  (Note that the cycle around the deleted point $x_0$ is 
homologous to zero in $R$.) Hence the function $u$ is 
well defined, single-valued and nonvanishing on $R$, 
and $u\in\fo(R)$ because $\xi\in \Omega^1_{alg}(R)$.     
Replacing $\omega$ by $\omega/u$ we obtain a nowhere vanishing 
1-form of finite order on $R$, 
still denoted $\omega$, 
for which the winding numbers $n_i$ in (\ref{eqn:ni}) equal zero. 
It follows that for every $i=1,\ldots,2g$ we have
\[
	\varphi_i^*\omega = e^{h_i(w)+c_i} \,\d w
\]
for some constants $c_i\in \C$ and holomorphic function $h_i$ 
on the annulus $A_r\subset \C$ with $h_i(1)=0$.
Note that the functions
$h_i\circ\varphi_i^{-1} \colon |\alpha_i|\to\C$ agree at the
unique intersection point $p$ of the curves $|\alpha_i|$, 
and hence they define a continuous function $H$ 
on $\Gamma=\cup_i |\alpha_i|$. 
For every $h\in \cO_{alg}(R)$ we have
\[
	\int_{\alpha_i} e^{-h}\omega = 
	e^{c_i} 
	\int_{|w|=1} e^{h_i - h\circ \varphi_i} \, \d w. 
\]
These numbers can be made arbitrarily small by  choosing 
$h$ to approximate $H$ uniformly on $\Gamma$
(which is equivalent to asking that $h_i-h\circ\varphi_i$
is small on $\{|w|=1\}$ for every $i=1,\ldots,2g$).
Such $h$ exist by Mergelyan's theorem: 
Since $R\bs \Gamma$ is connected, 
every continuous function on $\Gamma$ is a uniform limit
of functions in $\cO_{alg}(R)$ (c.f.\ \cite[Chap.\ 3]{Gaier}).

We assert that there exist functions $f_i\in \cO_{alg}(R)$ 
for $i=1,\ldots,2g$ and a number $\epsilon>0$ 
such that, for any $h\in \cO_{alg}(R)$ satisfying 
\begin{equation}
\label{eq:4}      
		\sup_{|\alpha_i|} |h_i\circ\varphi_i^{-1} - h| < \epsilon,  
		\qquad i=1,\ldots, 2g,
\end{equation}
there exist numbers $\zeta_i\in\C$ ($i=1,\ldots,2g$) such that
\begin{equation}
\label{eq:5} 
		\int_{\alpha_j} 
		\exp\left( \sum_{i=1}^{2g} \zeta_i f_i - h\right) \omega =0,
		\qquad   j=1,\ldots,2g.
\end{equation}
To prove this assertion, which clearly implies Theorem \ref{T1}
(the potential of the 1-form under the integral in 
(\ref{eq:5}) is a holomorphic function of finite order
and without critical points on $R$), 
choose functions $f_i\in \cO_{alg}(R)$ for $i=1,\ldots,2g$ satisfying  
\begin{equation}
\label{eq:6} 
   e^{c_j} \int_{|w|=1} 
   f_i \circ\varphi_j(w)\, \d w = \delta_{ij},  
\end{equation}
where $\delta_{ij}$ denotes the Kronecker's delta. 
Such $f_i$ exist by Proposition \ref{P4} applied with $K=\Gamma$. 
After fixing the $f_i$'s, let us choose numbers 
$0 < \epsilon_0 < 1$ and $C_0 > 1$  in such a way that 
\begin{equation}
\label{eq:7}
 		\sup_\Gamma \left| \exp\left( 
 		\sum_{i=1}^{2g} \tau_i f_i\right) - 1 -  
 		\sum_{i=1}^{2g} \tau_i f_i \right|  
 		\le C_0 \max_i |\tau_i|^2   
\end{equation}
holds if $\tau_i\in\C$ and $\max_i |\tau_i| \le \epsilon_0$.     

Let $c=\max_i |c_i|$. By decreasing the number 
$\epsilon_0>0$ if necessary we can assume that
\[
	8\pi C_0 e^{1+c} \epsilon_0 <1.
\]
Choose a constant  $C_1>0$ such that
\[
	|e^t - 1| <C_1 |t| \quad \hbox{if}\ |t| < \epsilon_0.
\]
Then, by (\ref{eq:6}) and (\ref{eq:7}), 
it is easy to see that, for any positive number $\epsilon>0$ 
satisfying  
\[
	8\pi C_1 \left (1+\sup_\Gamma \sum_{i=1}^{2g} 
	|f_i|\right)\epsilon  <\epsilon_0
\]
and for any $h\in \cO_{alg}(R)$ satisfying (\ref{eq:4}), 
the inequality  
\[
		\left| \tau_j  - \int_{\alpha_j} 
		\exp\left( \sum \tau_i f_i -h\right) \omega \right| 
		\le \frac{\epsilon_0}{2} 
\]
holds for every $j=1,\ldots, 2g$ whenever $\max_i |\tau_i| \le \epsilon_0$.     
Hence, for such a choice of $h$, the map  
\[
	\C^{2g} \ni \tau=(\tau_1,\ldots,\tau_{2g}) 
	\stackrel{\Phi}{\longrightarrow}
	\left(\Phi_1(\tau),\ldots,\Phi_{2g}(\tau)\right) \in\C^{2g},              
\]
whose $j$-th component is defined by 
\[
	\Phi_j(\tau) = \int_{\alpha_j} 
	\exp\left( \sum_{i=1}^{2g} \tau_i f_i - h\right) \omega,
\]
maps the polydisc 
$P=\{\tau \in\C^{2g} \colon \max|\tau_i| < \epsilon_0\}$
onto a neighborhood of the origin in $\C^{2g}$.
In particular, we have $\Phi(\zeta)=0$ 
for some point $\zeta=(\zeta_1,\ldots,\zeta_{2g}) \in P$,
and for this $\zeta$ the equations (\ref{eq:5}) hold. 
This concludes the proof of Theorem \ref{T1}.

\section{Concluding remarks}
By a minor adjustment of the proof of Theorem \ref{T1}
one can construct a nowhere vanishing holomorphic 
1-form of finite order, $\omega$, on $R$ whose periods 
$\int_{\alpha_j} \omega$ over the basis curves 
$[\alpha_j]$ of $H_1(R;\Z)$ are arbitrary given 
complex numbers. In other words, one can prove 
the following result.
(See Kusunoki and Sainouchi \cite{KusSai} and 
Majcen \cite{Majcen} 
for the corresponding result on open Riemann surface 
and without the finite order condition.)

\begin{theorem}
Let $X$ be a compact Riemann surface and $x_0\in X$.
Every element of the de Rham cohomology group $H^1(X;\C)$ is represented
by a nowhere vanishing holomorphic 1-form of finite order
on $R=X\bs\{x_0\}$. 
\end{theorem}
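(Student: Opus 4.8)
The plan is to adapt the proof of Theorem \ref{T1} almost verbatim, replacing the ``kill all periods'' target by a ``prescribe all periods'' target. Fix a basis $[\alpha_1],\ldots,[\alpha_{2g}]$ of $H_1(X;\Z)$ by simple closed real-analytic curves through a common point $p\in R$ with $R\bs\Gamma$ connected, exactly as in Section \ref{Proof-T1}, and fix the nowhere vanishing holomorphic $1$-form $\omega$ of finite order from Corollary \ref{C1}, normalized (by dividing by a suitable $u=\exp\int_p^x\xi$ with $\xi\in\Omega^1_{alg}(R)$) so that the winding numbers $n_i$ of $\varphi_i^*\omega$ around $|w|=1$ vanish; thus $\varphi_i^*\omega=e^{h_i(w)+c_i}\,\d w$ with $h_i(1)=0$, and the $h_i\circ\varphi_i^{-1}$ glue to a continuous function $H$ on $\Gamma$. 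Given the prescribed period vector $(b_1,\ldots,b_{2g})\in\C^{2g}$, the goal is to produce $g\in\fo(R)$ with $g^{-1}(0)=\emptyset$ and $\int_{\alpha_j} g\omega=b_j$ for all $j$; the primitive of $g\omega$ is then the desired nowhere vanishing $1$-form of finite order, and its de Rham class in $H^1(X;\C)$ is the one with periods $b_j$ (note $H^1(R;\C)\to H^1(X;\C)$ is onto, so every class in $H^1(X;\C)$ arises this way).

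The key step is the same fixed-point / open-mapping argument. As in \eqref{eq:6}, choose $f_i\in\cO_{alg}(R)$ by Runge approximation (Proposition \ref{P4}) so that $e^{c_j}\int_{|w|=1} f_i\circ\varphi_j(w)\,\d w=\delta_{ij}$, and choose $h\in\cO_{alg}(R)$ by Mergelyan's theorem so that $\sup_{|\alpha_i|}|h_i\circ\varphi_i^{-1}-h|<\epsilon$. Then for $\tau=(\tau_1,\ldots,\tau_{2g})$ with $\max_i|\tau_i|\le\epsilon_0$ consider
\[
	\Psi_j(\tau)=\int_{\alpha_j}\exp\Bigl(\sum_{i=1}^{2g}\tau_i f_i-h\Bigr)\omega,\qquad j=1,\ldots,2g.
\]
The same estimates using \eqref{eq:7} and the bounds on $\epsilon_0,\epsilon$ show $|\tau_j-\Psi_j(\tau)|\le\epsilon_0/2$ on the polydisc $P=\{\max_i|\tau_i|<\epsilon_0\}$, so $\Psi$ maps $P$ onto a neighborhood of the origin in $\C^{2g}$. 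The only genuine new point is that we want to hit a prescribed point $b=(b_1,\ldots,b_{2g})$ rather than $0$, so we must arrange that $b$ lies in the image. Since $\Psi$ is approximately the identity (shifted), its image contains a ball of radius $\epsilon_0/2$ about the origin; hence after scaling — equivalently, after replacing $\omega$ by $\delta\,\omega$ for a small constant $\delta>0$, which rescales all periods and keeps $\omega$ nowhere vanishing and of finite order — we may assume $\max_j|b_j|<\epsilon_0/2$, and then $b\in\Psi(P)$. Pick $\zeta\in P$ with $\Psi(\zeta)=b$; the function $g=\exp\bigl(\sum_i\zeta_i f_i-h\bigr)\in\fo(R)$ is nowhere zero and satisfies $\int_{\alpha_j} g\omega=b_j$.

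I expect the only mild obstacle to be bookkeeping rather than mathematics: one must check that the rescaling $\omega\mapsto\delta\omega$ can be done after the normalization step (it can — it only multiplies the $c_i$ by a constant and does not change the $h_i$ or the winding numbers $n_i$), and one must confirm that the open-mapping conclusion ``$\Psi(P)\supset$ a ball of fixed radius $\epsilon_0/2$'' really follows from the displayed estimate $|\tau_j-\Psi_j(\tau)|\le\epsilon_0/2$ on $P$ (it does, by the standard degree/Rouché argument already invoked implicitly in the proof of Theorem \ref{T1}: a continuous self-map of $\overline P$ that moves every point by at most $\epsilon_0/2$ has image containing the ball of radius $\epsilon_0/2$ about $\Psi(0)$, and $\Psi(0)=\int_{\alpha_j}e^{-h}\omega$ is itself small, of size $<\epsilon_0/2$, by \eqref{eq:4} with small $\epsilon$, after a further harmless shrinking of $\epsilon$). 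Finally one remarks that the class of the constructed $1$-form in $H^1(X;\C)\cong H^1(R;\C)/(\text{class of the loop around }x_0)$ is determined by the periods $b_j$ over $\alpha_1,\ldots,\alpha_{2g}$, which is why every class in $H^1(X;\C)$ is realized.
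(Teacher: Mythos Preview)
Your proposal is exactly the ``minor adjustment'' the paper alludes to without spelling out: the period map $\Phi$ from the proof of Theorem~\ref{T1} already covers the polydisc of polyradius $\epsilon_0/2$ about the origin (by the Brouwer fixed-point argument you indicate), so every small target $b$ is hit, and the general case follows by rescaling. Two small slips to correct: the desired object is the $1$-form $g\omega$ \emph{itself}, not its ``primitive'' (which does not even exist globally when the periods $b_j$ are nonzero---you have carried over the language from Theorem~\ref{T1}, where one does integrate the exact form); and the rescaling is cleanest stated as ``replace $b$ by $\delta b$, solve, then multiply the resulting $1$-form by $1/\delta$,'' rather than ``replace $\omega$ by $\delta\omega$,'' since the latter shifts each $c_i$ by $\log\delta$ and thereby changes the constants $c$, $\epsilon_0$, $C_0$ governing the estimate, whereas multiplying the final $1$-form by a nonzero scalar is manifestly harmless.
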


Since every affine algebraic curve $A\subset \C^N$
is obtained by deleting finitely many points
from a compact Riemann surface, 
Theorem \ref{T1} implies that 
{\em every affine algebraic curve admits a 
noncritical holomorphic function of finite order.}
One may ask whether the same result also holds 
on higher dimensional algebraic manifolds:

\begin{problem}
Does every affine algebraic manifold $A\subset \C^N$
of dimension $\dim A>1$ admit a 
noncritical holomorphic function $f\colon A\to\C$
of finite order?
\end{problem}

Here we say that $f$ is of finite order if
$|f(z)|\le \lambda\exp{|z|^\mu}$ holds for all 
$z\in A$ and for some pair of constants $\lambda,\mu>0$.

Since such $A$ is a Stein manifold, it admits a noncritical
holomorphic function according to \cite{Forstneric}. 
The construction in that paper 
is quite different from the one presented
here even for Riemann surfaces, and it
does not necessarily give a function of finite order
when $A$ is algebraic. The main difficulty 
is that the closedness
equation $\d \omega = 0$ for a holomorphic 1-form, 
which is automatically satisfied on a Riemann surface,
becomes a nontrivial condition when $\dim A >1$.
In particular, this condition is not preserved 
under multiplication by holomorphic functions,
and hence one can not hope to adjust the periods
in the same way as was done above.

\end{document}